\newcommand{\vect}{\text{span}}
\DeclareMathOperator{\card}{card}
\theoremstyle{plain}
\newtheorem{theorem}{Theorem}[section]
\newtheorem{lemma}[theorem]{Lemma}
\newtheorem{corollary}[theorem]{Corollary}
\newtheorem{proposition}[theorem]{Proposition}
\theoremstyle{definition}
\theoremstyle{remark}
\newtheorem{remark}[theorem]{Remark}
\newtheorem{question}[theorem]{Question}
\numberwithin{equation}{section}
\title{Dense Lineability and algebrability of $\ell^{\infty}\setminus c_0$}
\author{Dimitris Papathanasiou}
\address{Dimitris Papathanasiou, Euler International Mathematical Institute, Pesochnaya Naberezhnaya  10,Saint Petersburg, Russia, 197376.}
\email{dpapath@bgsu.edu}
\thanks{The research is supported by ``Native towns", a social investment program of PJSC ``Gazprom Neft"}
\keywords{Lineability, algebrability, sequence spaces}
\subjclass[2020]{15A03, 46B25, 46B87}
\begin{document}

\begin{abstract}
We show that the set $\ell^{\infty}\setminus c_0$ is maximal dense-lineable and  densely strongly $\mathfrak{c}$-algebrable answering a question posed by Nestoridis and complementing a result by Garc\'ia-Pacheco, Mart\'in and Seoane-Sep\'ulveda.
\end{abstract}

\maketitle

\section{Introduction}
Let $\ell^{\infty}$ be the Banach space of real bounded sequences normed with the uniform norm, and $c_0$ be the subspace of $\ell^{\infty}$ of sequences converging to zero. Since $c_0$ is a closed subspace of $\ell^{\infty}$, the set $\ell^{\infty}\setminus c_0$ is a dense $G_{\delta}$ subset of $\ell^{\infty}$. Rosenthal \cite{Rosenthal} showed that $c_0$ is quasi-complemented in $\ell^{\infty}$ from which it immediately follows that the set $(\ell^{\infty}\setminus c_0)\cup \{0\}$ contains a closed and infinite dimensional subspace. In the same direction, Garc\'ia-Pacheco, Mart\'in and Seoane-Sep\'ulveda \cite{Seoane} showed that if we consider $\ell^{\infty}$ as a Banach algebra, endowed with the coordinatewise product, then the set $(\ell^{\infty}\setminus c_0)\cup \{0\}$ even contains a closed  infinitely generated subalgebra. 

In the recent paper \cite{Nestoridis}, Nestoridis proved that if $0<p<q<\infty$, the set $(\ell^q\setminus \ell^p)\cup \{0\}$ contains a dense linear subspace and posed the following question:
\begin{question}[Nestoridis]
Does the set $(\ell^{\infty}\setminus c_0)\cup \{0\}$ contain a dense linear subspace?
\end{question}
Nestoridis also pointed out that the fact that $\ell^{\infty}$ is non-separable could add difficulties in trying to answer the above mentioned question. Indeed, the algebraic dimension of a dense linear subspace of $\ell^{\infty}$ must be $\mathfrak{c}$ (see Proposition \ref{maximality}) which makes it difficult to control that all its non-zero elements lie outside $c_0$. On the other hand, one could argue that the fact that $\ell^{\infty}$ is non-separable, in contrast to $c_0$ provides enough space in the set  $(\ell^{\infty}\setminus c_0)\cup \{0\}$ to find a dense linear subspace.

We will provide a positive answer to Nestoridis' question by proving the following more detailed result. Borrowing the terminology from Aron et al. \cite{Aron} we will call a subset $M$ of a topological vector space $X$ \textit{maximal dense-lineable} provided $M\cup \{0\}$ contains a dense linear subspace of $X$ of dimension $\dim(X)$.

\begin{theorem}\label{basic}
The set $\ell^{\infty}\setminus c_0$ is maximal dense-lineable.
\end{theorem}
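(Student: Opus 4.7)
The plan is to build a family $\{u_\alpha:\alpha<\mathfrak{c}\}\subset \ell^\infty\setminus c_0$ by transfinite induction so that its linear span $U$ is dense in $\ell^\infty$, intersects $c_0$ only at zero, and has dimension $\mathfrak{c}$. Fix a dense subset $D\subseteq\ell^\infty$ with $|D|=\mathfrak{c}$ (for instance, the bounded rational sequences) and enumerate $D\times\mathbb{Q}_{>0}=\{(w_\alpha,\delta_\alpha):\alpha<\mathfrak{c}\}$. Having chosen $u_\beta$ for all $\beta<\alpha$, set $V_\alpha:=\vect\{u_\beta:\beta<\alpha\}$ and seek $u_\alpha$ in the open ball $B(w_\alpha,\delta_\alpha)$ but \emph{outside} the forbidden set $V_\alpha+c_0$.

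The crux is that $V_\alpha+c_0$ is a \emph{proper} subspace of $\ell^\infty$ at every stage $\alpha<\mathfrak{c}$: since $\dim V_\alpha\leq|\alpha|<\mathfrak{c}$, the image of $V_\alpha$ in the quotient $\ell^\infty/c_0$ has dimension strictly smaller than $\dim(\ell^\infty/c_0)=\mathfrak{c}$, the latter being the classical consequence of the existence of an almost disjoint family of $\mathfrak{c}$ infinite subsets of $\mathbb{N}$. Hence $V_\alpha+c_0$ is a proper subspace of $\ell^\infty$, and since any proper subspace of a topological vector space has empty interior, the ball $B(w_\alpha,\delta_\alpha)$ cannot be contained in $V_\alpha+c_0$, so one can pick $u_\alpha\in B(w_\alpha,\delta_\alpha)\setminus(V_\alpha+c_0)$.

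To verify $U:=\vect\{u_\alpha:\alpha<\mathfrak{c}\}$ works: for \emph{density}, given $z\in\ell^\infty$ and $\eps>0$, take $w\in D$ with $\|z-w\|<\eps/2$ and $\delta\in\mathbb{Q}_{>0}$ with $\delta<\eps/2$; the index $\alpha$ with $(w_\alpha,\delta_\alpha)=(w,\delta)$ then yields $\|u_\alpha-z\|<\eps$. For the intersection property, any nontrivial relation $\sum_{i=1}^n c_iu_{\alpha_i}\in c_0$ with $\alpha_1<\cdots<\alpha_n$ and $c_n\neq 0$ would force $u_{\alpha_n}\in V_{\alpha_n}+c_0$, contradicting its construction, so $U\cap c_0=\{0\}$; the same argument applied to $\sum c_iu_{\alpha_i}=0$ shows the $u_\alpha$ are linearly independent, so $\dim U=\mathfrak{c}=\dim \ell^\infty$, giving maximality.

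The main obstacle is verifying at every stage that the forbidden subspace $V_\alpha+c_0$ really is proper---this is where the nonseparability of $\ell^\infty$ actually \emph{helps} rather than hurts, as Nestoridis' second remark anticipated: because $\dim(\ell^\infty/c_0)=\mathfrak{c}$ strictly exceeds the number $|\alpha|<\mathfrak{c}$ of vectors chosen so far, there is always room to pick $u_\alpha$ in the open ball while avoiding $V_\alpha+c_0$, and the induction can run all the way up to $\mathfrak{c}$.
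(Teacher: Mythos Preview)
Your proof is correct and takes a genuinely different route from the paper's. The paper builds the dense subspace explicitly: it indexes an almost disjoint family $\{(a_{(A,j)}(k))_k\}$ of infinite subsets of $\mathbb{N}$ by $\mathcal{P}(\mathbb{N})\times\mathbb{N}$, fixes a somewhere dense real sequence $(x_k)$, and sets $f_{(A,j)}=\chi_A+\tfrac{1}{j}\sum_k x_k\chi_{\{a_{(A,j)}(k)\}}$. The almost disjointness lets one isolate, along the support of $(a_{(A_i,j_i)}(k))_k$, a single perturbation term $\tfrac{c_i}{j_i}x_k$ added to a finite-range sequence, and a ``somewhere dense plus finite range is somewhere dense'' lemma then shows the linear combination is not in $c_0$. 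Your argument instead runs a transfinite recursion of length $\mathfrak{c}$, at each stage choosing $u_\alpha$ in the prescribed ball but outside $V_\alpha+c_0$; the key input is the inequality $\dim(V_\alpha)<\mathfrak{c}=\dim(\ell^\infty/c_0)$ together with the empty-interior property of proper subspaces.

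What each approach buys: your argument is shorter, needs no combinatorial lemmas, and generalises immediately to any closed subspace $F\subset \ell^\infty$ with $\dim(\ell^\infty/F)=\mathfrak{c}$ (indeed to any Banach space $X$ and closed $F\subset X$ with $\dim(X/F)$ at least the density character of $X$). The paper's explicit construction, on the other hand, is tailored so that a variant of the same idea (replacing the almost disjoint perturbations by sequences that are jointly dense in all finite-dimensional cubes) also yields the stronger densely strongly $\mathfrak{c}$-algebrable statement; your transfinite avoidance argument does not adapt to algebrability in any obvious way, since the set of polynomial expressions in the previously chosen $u_\beta$ is not a subspace.
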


Motivated by the result of Garc\'ia-Pacheco, Mart\'in and Seoane-Sep\'ulveda, we also considered the question of whether the set $(\ell^{\infty}\setminus c_0)\cup \{0\}$ contains a dense subalgebra, where as before, we endow $\ell^{\infty}$ with the coordinatewise product. Following  Bartoszewicz and G\l\c{a}b \cite{BaGl} we will call a subset $M$ of a Banach algebra $X$ \textit{densely strongly $\mathfrak{c}$-algebrable} provided $M\cup \{0\}$ contains a dense subalgebra of $X$ generated by an algebraically independent set of cardinality $\mathfrak{c}$.

\begin{theorem}\label{algebrability}
The set $\ell^{\infty}\setminus c_0$ is densely strongly $\mathfrak{c}$-algebrable.
\end{theorem}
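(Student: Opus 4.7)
The plan is to exhibit a family $\{f_\alpha\}_{\alpha<\mathfrak{c}}\subset \ell^{\infty}$ that is algebraically independent over $\mathbb{R}$ and generates a subalgebra $\mathcal{A}$ which is dense in $\ell^{\infty}$ and satisfies $\mathcal{A}\setminus\{0\}\subset \ell^{\infty}\setminus c_0$.

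For algebraic independence and the condition $\mathcal{A}\cap c_0=\{0\}$, I would adapt the exponential technique of Bartoszewicz and G\l\c{a}b \cite{BaGl}. Take a $\mathbb{Q}$-linearly independent family $\{r_\alpha\}_{\alpha<\mathfrak{c}}$ of positive reals (from a Hamel basis of $\mathbb{R}$ over $\mathbb{Q}$) and a bounded sequence $h=(h_n)$ with $h_n\in[a,b]$, $0<a<b$, whose cluster set equals $[a,b]$; for instance, an enumeration of $\mathbb{Q}\cap[a,b]$ with each rational repeated infinitely often. The ``building blocks'' $e^{r_{\alpha}h}$ have the property that any nonzero polynomial $P$ without constant term evaluated on finitely many of them takes the form $\sum_{\beta}c_\beta e^{t_\beta h_n}$ with distinct nonzero exponents $t_\beta=\beta\cdot r$, hence defines a nonzero real-analytic function of $h_n$; this function has only countably many zeros, so it cannot absorb all uncountably many cluster points of $(h_n)$, and the corresponding sequence therefore lies outside $c_0$.

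The main obstacle is density, since the algebra of exponential-polynomial functions of a single sequence $h$ is far from dense in $\ell^{\infty}$. To handle this I would enrich the generators with a selector coming from an independent family $\{A_\alpha\}_{\alpha<\mathfrak{c}}$ of subsets of $\mathbb{N}$---every finite Boolean combination being infinite, which exists in cardinality $\mathfrak{c}$ by Hausdorff's theorem---by setting for instance
\[
f_\alpha(n)=\begin{cases}e^{r_\alpha h_n},& n\in A_\alpha,\\ 1,& n\notin A_\alpha.\end{cases}
\]
On the (still infinite) intersection $\bigcap_i A_{\alpha_i}$, any polynomial in the $f_{\alpha_i}$ reduces to the exponential-polynomial form above, preserving the no-$c_0$ property. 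For density one would then try to invoke Stone-Weierstrass on $\ell^{\infty}\cong C(\beta\mathbb{N})$: the generators are bounded below by $1$, so the algebra vanishes nowhere, and it remains to verify that $\{f_\alpha\}$ separates the points of $\beta\mathbb{N}$.

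The crux is to arrange simultaneously the independence of $\{A_\alpha\}$ (needed for the no-$c_0$ argument) and sufficient richness so that $\{f_\alpha\}$, together with the value-distinguishing power of $h$, separates every pair of points of $\beta\mathbb{N}$ (needed for density). Since a family of $\mathfrak{c}$ characteristic functions alone cannot separate all $2^{\mathfrak{c}}$ points of $\beta\mathbb{N}$, the variation of $h$ across cells of the independent family must be exploited. I would expect this to be achieved by a transfinite recursion of length $\mathfrak{c}$ that at each step addresses a single outstanding separation or approximation requirement while preserving the previously established independence and no-$c_0$ conditions; the bookkeeping required is where I expect the bulk of the technical work to lie.
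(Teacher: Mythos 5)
Your proposal has a genuine gap, and you identify it yourself: the density of the generated algebra is never established. The exponential construction $f_\alpha=e^{r_\alpha h}$ on an independent family handles (most of) the ``avoid $c_0$'' requirement, but the Stone--Weierstrass route you sketch for density is left as a hope pinned on an unspecified transfinite recursion, and it is not clear it can be completed in that form: density in $\ell^{\infty}\cong C(\beta\mathbb{N})$ requires the generators to separate the points of $\beta\mathbb{N}$, and your generators take only the values $1$ and $e^{r_\alpha h_n}$, so two ultrafilters concentrated on sets where $h$ behaves identically and which the family $\{A_\alpha\}$ fails to distinguish will not be separated. There is also a second, unacknowledged gap in the no-$c_0$ part: on $\bigcap_i A_{\alpha_i}$ your polynomial reduces to $\sum_\beta c_\beta e^{t_\beta h_n}$, and you argue this exponential polynomial has only countably many zeros; but you then need the cluster set of $h$ \emph{restricted to} $\bigcap_i A_{\alpha_i}$ to be uncountable, and an independent family only guarantees that this intersection is infinite, not that $h$ restricted to it still clusters on all of $[a,b]$. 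This is repairable but requires coordinating the choice of $h$ with the independent family, which you do not do.

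The paper resolves both difficulties at once with a much lighter device. By the separability of the Tychonoff cube $[0,1]^{\mathfrak{c}}$ one obtains a family $(f_{(A,j)})$, indexed by $\mathcal{P}(\mathbb{N})\times\mathbb{N}$ with $\|f_{(A,j)}\|\le 1/j$, such that for every finite subfamily the joint value sequence $\bigl(f_{(A_1,j_1)}(n),\dots,f_{(A_k,j_k)}(n)\bigr)_n$ is dense in a cube of $\mathbb{R}^k$. The generators are $h_{(A,j)}=\chi_A+f_{(A,j)}$. Density is then automatic: $h_{(A,j)}\to\chi_A$ as $j\to\infty$ and the characteristic sequences already span a dense subspace, so no Stone--Weierstrass or point-separation argument is needed. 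The no-$c_0$ property follows because a nonzero polynomial vanishing at the origin has nowhere dense zero set in $\mathbb{R}^k$, while the joint value sequence of the $h$'s is somewhere dense (a finite-range perturbation of a somewhere dense sequence is somewhere dense), so $|P(h_{(A_1,j_1)},\dots,h_{(A_k,j_k)})(n_l)|\ge\delta$ along a subsequence. If you want to salvage your approach, the key missing idea is precisely this: make the generators small perturbations of the $\chi_A$ themselves, so that density comes for free, and push all the genericity needed against polynomials into the perturbations via the separability of $[0,1]^{\mathfrak{c}}$.
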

 
Theorems \ref{basic} and \ref{algebrability} immediately yield the following corollary which fills a gap in the literature of lineability and algebrability.

\begin{corollary}
If $0<p<\infty$ then the following hold.
\begin{enumerate}[label=(\roman*)]
\item The set $(\ell^{\infty}\setminus \ell^p)\cup \{0\}$ is maximal dense-lineable.
\item If we endow $\ell^{\infty}$ with the coordinatewise product, the set $(\ell^{\infty}\setminus \ell^p)\cup \{0\}$ is  densely strongly $\mathfrak{c}$-algebrable.
\end{enumerate}
\end{corollary}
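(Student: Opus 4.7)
The plan is almost entirely a matter of invoking the two main theorems stated above, together with one elementary set-theoretic inclusion. The key observation is that for any $0<p<\infty$, every $p$-summable sequence converges to zero, i.e.\ $\ell^p\subseteq c_0$. Taking complements inside $\ell^{\infty}$ reverses the inclusion, giving $\ell^{\infty}\setminus c_0\subseteq \ell^{\infty}\setminus \ell^p$.

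For part (i), I would apply Theorem \ref{basic} to obtain a dense linear subspace $V\subseteq \ell^{\infty}$ with $\dim V = \dim \ell^{\infty}=\mathfrak{c}$ such that $V\setminus\{0\}\subseteq \ell^{\infty}\setminus c_0$. The inclusion above then gives $V\setminus\{0\}\subseteq \ell^{\infty}\setminus \ell^p$, so $V\subseteq (\ell^{\infty}\setminus \ell^p)\cup\{0\}$, which is precisely maximal dense-lineability. For part (ii), I would invoke Theorem \ref{algebrability} to produce a dense subalgebra $A\subseteq \ell^{\infty}$ generated by an algebraically independent set of cardinality $\mathfrak{c}$, with $A\setminus\{0\}\subseteq \ell^{\infty}\setminus c_0$; the same inclusion shows that $A\setminus\{0\}\subseteq \ell^{\infty}\setminus \ell^p$, and the conclusion follows.

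There is no genuine obstacle in this corollary: once the containment $\ell^p\subseteq c_0$ is noted, the statements are strictly weaker than (and contained in) Theorems \ref{basic} and \ref{algebrability}. The only thing worth mentioning for completeness is that the subspace $V$ (respectively the subalgebra $A$) produced by the previous theorems can be taken to be the same for every $p$, so a single construction witnesses all the corollary's instances simultaneously.
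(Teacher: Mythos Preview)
Your proposal is correct and matches the paper's approach exactly: the paper does not give a separate proof of this corollary but simply states that it follows immediately from Theorems~\ref{basic} and~\ref{algebrability}, and the inclusion $\ell^p\subseteq c_0$ you spell out is precisely the (unstated) one-line reason why.
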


The idea behind the proofs of Theorems \ref{basic} and \ref{algebrability} is common. For $A\subset \mathbb{N}$ we denote by $\chi_A:\mathbb{N}\rightarrow \mathbb{R}$ the characteristic sequence of $A$. The set $\{ \chi_A: A\subset \mathbb{N\}}$ generates a dense linear subspace of $\ell^{\infty}$. We want to conveniently perturb those characteristic sequences in such a way that the subspace or the subalgebra generated by the modified sequences intersects $c_0$ trivially. 

Theorem \ref{algebrability} is clearly stronger than Theorem \ref{basic} however, we provide two separate proofs for those results since we feel that the proof of Theorem \ref{basic} is more intuitive than the proof of Theorem \ref{algebrability}. In this way, one does not have to go through the more technical proof of Theorem \ref{algebrability} if interested in mere dense-lineability.

We also observe that due to the following simple fact, in our setting the maximality part of Theorem \ref{basic} and the cardinality part of Theorem \ref{algebrability} follow for free. 

\begin{proposition}\label{maximality}
\begin{enumerate}
    \item[(i)] If $E$ is a dense subspace of $\ell^{\infty}$ then $\dim(E)=\mathfrak{c}$.
    \item[(ii)] If $A$ is a dense subalgebra of $\ell^{\infty}$ (endowed with the coordinatewise product) then each set of generators of $A$ has cardinality $\mathfrak{c}$.
\end{enumerate}
\end{proposition}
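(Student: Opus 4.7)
The plan is to deduce both (i) and (ii) from a single cardinality observation: \emph{any dense subset of $\ell^{\infty}$ has cardinality at least $\mathfrak{c}$}. To see this, consider the family of characteristic sequences $\{\chi_{A}:A\subset\mathbb{N}\}$. It has cardinality $\mathfrak{c}$, and any two of its members are at $\ell^{\infty}$-distance exactly $1$; hence the open balls of radius $1/2$ centered at these sequences are pairwise disjoint, and any dense subset of $\ell^{\infty}$ must meet each one.

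For part (i), let $E$ be a dense subspace of $\ell^{\infty}$ and fix a Hamel basis $\{e_{\alpha}\}_{\alpha\in I}$ of $E$. Since $\mathbb{Q}$ is dense in $\mathbb{R}$, the set $D$ of finite $\mathbb{Q}$-linear combinations of basis vectors is dense in $E$, and therefore dense in $\ell^{\infty}$. One has $|D|=\max(|I|,\aleph_{0})$, so the observation gives $|I|\geq\mathfrak{c}$. Combined with $\dim E \leq |\ell^{\infty}| = \mathfrak{c}$, this yields $\dim E=\mathfrak{c}$.

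For part (ii), let $A$ be a dense subalgebra generated by a set $S$. By continuity of addition and of coordinatewise multiplication, the subset of $A$ consisting of polynomial expressions in elements of $S$ with rational coefficients is dense in $A$, hence in $\ell^{\infty}$. Its cardinality is $\max(|S|,\aleph_{0})$, so the observation forces $|S|\geq\mathfrak{c}$; since $|S|\leq|A|\leq|\ell^{\infty}|=\mathfrak{c}$, one concludes $|S|=\mathfrak{c}$.

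There is no serious obstacle in the argument: it is a bookkeeping exercise exploiting the non-separability of $\ell^{\infty}$. The only mildly delicate point is verifying that the rational-coefficient polynomials in an infinite generating set $S$ really number $\max(|S|,\aleph_{0})$, which follows from the standard fact that for infinite $|S|$ the collection of finite multisubsets of $S$ has cardinality $|S|$.
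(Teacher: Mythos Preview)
Your proof is correct and follows essentially the same approach as the paper: both rely on the observation that any dense subset of $\ell^{\infty}$ has cardinality $\mathfrak{c}$, and then count rational-coefficient linear combinations of a Hamel basis (for (i)) or rational-coefficient polynomial expressions in the generators (for (ii)) to force the lower bound. The only cosmetic difference is that the paper routes (ii) through (i) by first counting monomials in $G$ and invoking $\dim A=\mathfrak{c}$, whereas you apply the cardinality observation directly to the set of rational-coefficient polynomials in $S$.
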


\begin{proof}

    To prove (i), we observe that since $\overline{E}=\ell^{\infty}$, $E$ has to be infinite-dimensional. Now, considering linear combinations with rational coefficients of a Hamel basis of $E$, and taking into account that any dense subset of $\ell^{\infty}$ has cardinality $\mathfrak{c}$, it follows that $\dim(E)=\mathfrak{c}$.
    
    Concerning (ii), let $G$ be any set generating $A$. That means that if $\tilde{G}$ is the set of all elements of the form
    $$
    g_{i_1}^{n_1}\dots g_{i_k}^{n_k}
    $$
    where $k, n_1, \dots, n_k\in \mathbb{N}$ and $\{g_{i_1},\dots ,g_{i_k}\}$ is a finite subset of $G$, then,
    $$
    A=\vect(\tilde{G}).
    $$
    Now, $G$ has to be infinite since otherwise, $\dim(A)\leq \aleph_0$ which by (i), contradicts the fact that $A$ is dense in $\ell^{\infty}$. For the same reason we get that
    $$
    \mathfrak{c}=\card(\tilde{G})\leq \card(G) \aleph_0=\card(G) 
    $$
    from which it follows that $\card(G)=\mathfrak{c}$.
\end{proof}
\section{Proof of Theorem \ref{basic}}

We will need two lemmas, the first is a well known fact that appears in many places in literature, but we state it and prove it here for the sake of self containment.

\begin{lemma}\label{sets}
There are $\mathfrak{c}$ infinite subsets of $\mathbb{N}$ pairwise finitely intersected.
\end{lemma}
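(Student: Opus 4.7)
The plan is to construct an almost disjoint family of cardinality $\mathfrak{c}$ via the classical ``branches of the binary tree'' (or equivalently, ``sequences of rationals converging to distinct reals'') trick. Since the statement concerns $\mathbb{N}$ but the construction is most natural on a different countable set, I would first fix a bijection between $\mathbb{N}$ and some countable set $S$ where the desired family is easy to describe, and then transport the family back to $\mathbb{N}$.

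Concretely, I would take $S=\mathbb{Q}$ and fix a bijection $\varphi:\mathbb{N}\to\mathbb{Q}$. For each real number $r\in\mathbb{R}$, choose an injective sequence $(q_n^r)_{n\in\mathbb{N}}$ of rationals converging to $r$, and set
\[
B_r=\{q_n^r:n\in\mathbb{N}\}\subset\mathbb{Q},\qquad A_r=\varphi^{-1}(B_r)\subset\mathbb{N}.
\]
Each $A_r$ is infinite by injectivity of the sequence. If $r\neq s$, pick disjoint open neighborhoods $U\ni r$ and $V\ni s$; then $B_r\cap U^c$ and $B_s\cap V^c$ are both finite, so $B_r\cap B_s\subset (B_r\cap U^c)\cup(B_s\cap V^c)$ is finite, hence $A_r\cap A_s$ is finite. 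The family $\{A_r:r\in\mathbb{R}\}$ has cardinality $\mathfrak{c}$ because $r\mapsto A_r$ is injective: if $r\neq s$ then $A_r$ and $A_s$ differ (in fact, each is infinite while their intersection is finite).

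The argument has no real obstacle; the only thing to be careful about is verifying that the map $r\mapsto A_r$ is genuinely injective, which is immediate from the finite-intersection property combined with infinitude. An equivalent formulation that avoids the choice of convergent rational sequences is to identify $\mathbb{N}$ with the countable set $\{0,1\}^{<\mathbb{N}}$ of finite binary words and, for each $x\in\{0,1\}^{\mathbb{N}}$, take $A_x$ to be the set of finite initial segments of $x$; the verification of almost-disjointness is the same, since any common initial segment has length bounded by the first coordinate where $x$ and $y$ disagree.
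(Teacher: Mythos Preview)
Your proof is correct and follows essentially the same approach as the paper: the paper enumerates $\mathbb{Q}\cap[0,1]$ as $(q_n)$ and, for each irrational $r\in[0,1]$, takes the set of indices $\{n_k(r)\}$ along which $q_{n_k(r)}\to r$, which is exactly your construction phrased via the enumeration rather than via an explicit bijection $\varphi$. The only cosmetic differences are that the paper restricts to irrationals in $[0,1]$ (avoiding the need to argue injectivity of $r\mapsto A_r$, since no $A_r$ can contain a tail of another) and works directly with indices instead of transporting along a bijection.
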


\begin{proof}
Let $(q_n)_{n=1}^{\infty}$ be an enumeration of $\mathbb{Q}\cap [0,1]$. For each $r\in [0,1]\setminus \mathbb{Q}$ we may find an increasing sequnce of natural numbers $(n_k(r))_{k=1}^{\infty}$ such that $q_{n_k(r)}\rightarrow r$, as $k\rightarrow \infty$. The sets $(n_k(r))_{k=1}^{\infty}$, for $r\in [0,1]\setminus \mathbb{Q}$ have the desired properties.
\end{proof}

The next simple fact provides our main tool in perturbing sequences with finitely many values to sequences that lie outside $c_0$ and will also be used in the proof of Theorem \ref{algebrability}. 

\begin{lemma}\label{pert_somewhere_dense}
Let $(X, \| \cdot \|)$ be a (real or complex) normed space, $a=(a_n)$ a sequence in $X$ with finite range, and $b=(b_n)$ a sequence somewhere dense in $X$. Then the sequence $a+b$ is also somewhere dense in $X$.
\end{lemma}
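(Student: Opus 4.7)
The plan is to partition $\mathbb{N}$ according to the finitely many values of $a$, and then locate one block of the partition on which $b$ still behaves densely. Write the range of $a$ as $\{v_1,\dots,v_k\}$ and set $A_i=\{n\in\mathbb{N}:a_n=v_i\}$, so that $\mathbb{N}=A_1\cup\cdots\cup A_k$. Since translation by $v_i$ is a homeomorphism of $X$, it will suffice to produce an index $i$ and a non-empty open set $V\subseteq X$ in which $\{b_n:n\in A_i\}$ is dense: then $\{v_i+b_n:n\in A_i\}\subseteq\{a_n+b_n:n\in\mathbb{N}\}$ will be dense in the non-empty open set $v_i+V$, which is exactly the conclusion that $a+b$ is somewhere dense.

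To produce such an $i$ and $V$, let $U$ be a non-empty open set on which $\{b_n:n\in\mathbb{N}\}$ is dense, which exists by hypothesis. Then, because the union is finite,
\[
U\subseteq\overline{\{b_n:n\in\mathbb{N}\}}=\bigcup_{i=1}^{k}\overline{\{b_n:n\in A_i\}}.
\]
The key observation is a purely topological one: if a non-empty open set is covered by finitely many closed sets, then one of those closed sets contains a non-empty open subset of the original open set. I would prove this by a short induction on $k$. If $U\subseteq\overline{\{b_n:n\in A_k\}}$ there is nothing to do; otherwise $W:=U\setminus\overline{\{b_n:n\in A_k\}}$ is a non-empty open set disjoint from $\overline{\{b_n:n\in A_k\}}$, hence covered by the remaining $k-1$ closed sets, and the inductive hypothesis applied to $W$ yields the desired open subset.

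Applying this covering lemma to the inclusion above produces an index $i$ and a non-empty open $V$ with $V\subseteq\overline{\{b_n:n\in A_i\}}$; density of $\{b_n:n\in A_i\}$ in $V$ is immediate, since every open neighborhood of a point of $V$ meets the set whose closure contains $V$. The main conceptual step is the elementary covering lemma above; everything else is a routine translation argument, and notably no completeness or Baire-category hypothesis on $X$ is required.
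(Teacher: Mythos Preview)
Your argument is correct and follows essentially the same route as the paper's proof: partition $\mathbb{N}$ by the finitely many values of $a$, note that one block must carry a somewhere dense piece of $b$, and translate. The only difference is that you spell out, via the short induction you call the ``covering lemma'', the step the paper compresses into the single line ``there is $j$ such that $\{b_n:n\in N_j\}$ is somewhere dense.''
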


\begin{proof}
Let $\{c_1,\dots ,c_k\}$, for some $k\in \mathbb{N}$ be the range of $a$ and $\{N_i: i=1,\dots ,k\}$ a partition of $\mathbb{N}$ such that 
$$
a_n=c_i, \quad \text{for} \quad n\in N_i.
$$
Now, since
$$
\{b_n: n\in \mathbb{N}\}=\bigcup_{i=1}^k\{b_n: n\in N_i\}
$$
there is $j\in \{1,\dots ,k\}$ such that
$$
\{b_n: n\in N_j\}
$$
is somewhere dense. Since
$$
\{a_n+b_n: n\in N_j\}=c_j+\{b_n: n\in N_j\}
$$
the latter set is a somewhere dense subset in $X$.
\end{proof}

We may now proceed with the proof of Theorem \ref{basic}.
\begin{proof}[Proof of Theorem \ref{basic}]
If $A\subset \mathbb{N}$, we want to associate to $\chi_A$ a sequence of suitable elements from $\ell^{\infty}\setminus c_0$ converging to $\chi_A$. To this end, we let by Lemma \ref{sets} 
$$
\{(a_{(A,j)}(k))_{k=1}^{\infty}: (A,j)\in \mathcal{P}(\mathbb{N})\times \mathbb{N}\}
$$
be a family of infinite, pairwise finitely intersected subsets of $\mathbb{N}$, indexed by $\mathcal{P}(\mathbb{N})\times \mathbb{N}$ and enumerated as increasing sequences. Note that we are using the fact that the cardinality of the set $\mathcal{P}(\mathbb{N})\times \mathbb{N}$ is $\mathfrak{c}$. Let also $x=(x_n)_{n=1}^{\infty}\in \ell^{\infty}$ be a sequence which is somewhere dense in $\mathbb{R}$ (an enumeration of the rationals of $[0,1]$ would do the job). For $A\subset \mathbb{N}$ and $j\in \mathbb{N}$, we define the following elements of $\ell^{\infty}$,
$$
f_{(A,j)}=\chi_A+\frac{1}{j}\sum_{k=1}^{\infty}x_k \chi_{\{a_{(A,j)}(k)\}}.
$$
Clearly, 
$$
f_{(A,j)}\rightarrow \chi_A \quad \text{as} \quad j\rightarrow \infty
$$
hence, the set 
$$
Y=\vect \{f_{(A,j)}: A\subset \mathbb{N}, j\in \mathbb{N} \}
$$
is dense in $\ell^{\infty}$. We want to show that 
$$
Y\cap c_0=\{0\}.
$$ 
For $p\geq 1$, let $c_1,\dots ,c_p\in \mathbb{R}$ be such that $c_i\neq 0$, for some $i\in \{1,\dots ,p\}$ and $(A_1, j_1),\dots ,(A_p, j_p)$ be pairwise distinct elements of $\mathcal{P}(\mathbb{N})\times \mathbb{N}$. We consider $k_0 \in \mathbb{N}$ satisfying that
$$
\{a_{(A_i,j_i)}(k): k\geq k_0\}\cap \{a_{(A_l,j_l)}(k): k\geq k_0\}=\emptyset
$$
for all $l\neq i$, with $l \in \{1,\dots p\}$. For $k\geq k_0$, we then have that
\begin{align*}
    &c_1f_{(A_1,j_1)}(a_{(A_i,j_i)}(k))+\dots +c_pf_{(A_p,j_p)}(a_{(A_i,j_i)}(k))=\\
    &c_1\chi_{A_1}(a_{(A_i,j_i)}(k))+\dots +c_p\chi_{A_p}(a_{(A_i,j_i)}(k))+\frac{1}{j_i}c_ix_k.
\end{align*}
This, together with Lemma \ref{pert_somewhere_dense} give that the sequence
$$
(c_1f_{(A_1,j_1)}(a_{(A_i,j_i)}(k))+\dots +c_pf_{(A_p,j_p)}(a_{(A_i,j_i)}(k)))_{k=1}^{\infty}
$$
is somewhere dense in $\mathbb{R}$. In particular, 
$$
c_1f_{(A_1,j_1)}+\dots +c_pf_{(A_p,j_p)}\notin c_0
$$ 
which shows that $\ell^{\infty}\setminus c_0$ contains a dense linear subspace. An application of Proposition \ref{maximality} (i) concludes the proof. 
\end{proof}

\section{Proof of Theorem \ref{algebrability}}

The way we perturbed the characteristic sequences in the proof of Theorem \ref{basic} does not work for proving Theorem \ref{algebrability}. The problem is that the product of two sequences supported on finitely intersected subsets of $\mathbb{N}$ will be a finitely supported sequence thus it will belong to $c_0$. Obviously, we cannot hope on separating totally an uncountable family of subsets of $\mathbb{N}$. Thus, we establish the following result that allows us to overcome this obstacle. We thank the anonymous referee for indicating to us this short, topological argument.

\begin{lemma}\label{somewhere_dense_sequences}
There is a family of cardinality $\mathfrak{c}$  
$$
(f_{\gamma})_{\gamma <\mathfrak{c}}\subset [0,1]^{\mathbb{N}}
$$ 
such that for each finite subset $\{\gamma_1,\dots ,\gamma_k\}\subset \mathfrak{c}$, where $k\in \mathbb{N}$, the set
$$
\{ (f_{\gamma_1}(n),\dots ,f_{\gamma_k}(n)): n\in \mathbb{N}\}
$$
is dense in $[0,1]^k$.
\end{lemma}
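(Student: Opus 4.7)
The plan is to obtain the family $(f_\gamma)$ as coordinate projections of a countable dense subset of the product space $[0,1]^{\mathfrak{c}}$, exploiting the fact that although $\mathfrak{c}$ is uncountable, $[0,1]^{\mathfrak{c}}$ is still separable.

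First, I would invoke the Hewitt--Marczewski--Pondiczery theorem, which states that a product of at most $\mathfrak{c}$ separable spaces is separable. Applying this to $[0,1]^{\mathfrak{c}}$, I obtain a countable dense subset $D=\{d_n:n\in\mathbb{N}\}\subset [0,1]^{\mathfrak{c}}$. For each index $\gamma<\mathfrak{c}$, I would then define
$$
f_\gamma(n):=d_n(\gamma)\in[0,1], \qquad n\in\mathbb{N},
$$
so that $f_\gamma\in[0,1]^{\mathbb{N}}$. This yields a family of cardinality $\mathfrak{c}$ (indexed by $\gamma<\mathfrak{c}$), which is what we want.

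Next, I need to verify the density condition. Fix any finite subset $\{\gamma_1,\dots,\gamma_k\}\subset\mathfrak{c}$ and consider the projection
$$
\pi:[0,1]^{\mathfrak{c}}\to [0,1]^k,\qquad \pi(x)=(x(\gamma_1),\dots,x(\gamma_k)).
$$
This map is continuous and surjective (in fact it is an open map onto $[0,1]^k$). Since $D$ is dense in $[0,1]^{\mathfrak{c}}$ and continuous surjections onto Hausdorff spaces send dense sets to dense sets, $\pi(D)$ is dense in $[0,1]^k$. But by construction
$$
\pi(D)=\{(d_n(\gamma_1),\dots,d_n(\gamma_k)):n\in\mathbb{N}\}=\{(f_{\gamma_1}(n),\dots,f_{\gamma_k}(n)):n\in\mathbb{N}\},
$$
which is exactly the set we want dense in $[0,1]^k$.

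There is no real obstacle beyond citing the separability of $[0,1]^{\mathfrak{c}}$; the whole argument reduces to the observation that any $k$ coordinate projections of a dense set remain dense in $[0,1]^k$. The reason the earlier almost-disjoint-sets approach from the proof of Theorem~\ref{basic} fails in the algebra setting is that products of sequences with almost disjoint supports are finitely supported, whereas here the $f_\gamma$ take positive values on all of $\mathbb{N}$ and their joint values exhaust a dense subset of $[0,1]^k$, so even after multiplication one retains somewhere-dense behaviour on coordinates.
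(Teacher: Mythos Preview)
Your proof is correct and follows essentially the same route as the paper: both invoke the separability of the Tychonoff cube $[0,1]^{\mathfrak{c}}$, pick a countable dense set $\{d_n\}$, and set $f_\gamma(n)=d_n(\gamma)$, with the density condition following from the definition of the product topology (equivalently, from the fact that finite coordinate projections are continuous surjections). Your version is simply more explicit, naming the Hewitt--Marczewski--Pondiczery theorem and spelling out the projection argument that the paper leaves implicit.
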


\begin{proof}
The proof is an immediate consequence of the fact that the Tychonoff cube $[0,1]^{\mathfrak{c}}$ endowed with the product topology, is separable (see for instance \cite[Theorem 16.4 (c)]{Wil}). Indeed, let 
$$
\{x_n: n\in \mathbb{N}\}
$$
be a dense subset of $[0,1]^{\mathfrak{c}}$. For each $\gamma<\mathfrak{c}$, we define the element $f_{\gamma}\in [0,1]^{\mathbb{N}}$  by
$$
f_{\gamma}(n)=x_n(\gamma), \quad n\in \mathbb{N}.
$$
The fact that the family $(f_{\gamma})_{\gamma <\mathfrak{c}}$ satisfies the claim follows by the definition of the product topology.
\end{proof}

\begin{proof}[Proof of Theorem \ref{algebrability}]
By Lemma \ref{somewhere_dense_sequences} and the fact that the cardinality of $\mathcal{P}(\mathbb{N})\times \mathbb{N}$ is $\mathfrak{c}$, for $A\subset \mathbb{N}$ and $j\in \mathbb{N}$, we get sequences 
$$
f_{(A,j)}\in \ell^{\infty}
$$
with $\|f_{(A,j)}\|\leq \frac{1}{j}$ and satisfying that if 
$$
\{(A_1, j_1),\dots ,(A_k, j_k)\}
$$ 
with $k\in \mathbb{N}$, is a finite subset of $\mathcal{P}(\mathbb{N})\times \mathbb{N}$, then
$$
\{ (f_{(A_1, j_1)}(n),\dots ,f_{(A_k, j_k)}(n)): n\in \mathbb{N}\}
$$
is somewhere dense in $\mathbb{R}^k$. Now, let us set
$$
h_{(A, j)}=\chi_A+ f_{(A,j)}, \quad (A,j)\in \mathcal{P}(\mathbb{N})\times \mathbb{N}.
$$
Clearly,
$$
h_{(A, j)}\rightarrow \chi_A \quad \text{as} \quad j\rightarrow \infty
$$
hence, the set 
$$
\{ h_{(A, j)}: A\subset \mathbb{N}, j\in \mathbb{N}\}
$$ 
spans a dense subspace of $\ell^{\infty}$. We will show that if $B$
is the algebra generated by $\{ h_{(A, j)}: A\subset \mathbb{N}, j\in \mathbb{N}\}$, then 
$$
B\cap c_0=\{0\}.
$$
Let for $k\in \mathbb{N}$, $P\in \mathbb{R}[x_1,\dots x_k]$ be a non-zero polynomial vanishing at the origin. The zero set of $P$ is nowhere dense in $\mathbb{R}^k$. If $(A_1, j_1),\dots ,(A_k, j_k)$ are distinct elements of $\mathcal{P}(\mathbb{N})\times \mathbb{N}$, since the range of the sequence
$$
(\chi_{A_1}(n),\dots , \chi_{A_k}(n))_{n=1}^{\infty}
$$ 
is finite, Lemma \ref{pert_somewhere_dense} ensures that
$$
\{ (h_{(A_1, j_1)}(n),\dots , h_{(A_k, j_k)}(n)): n\in \mathbb{N}\}
$$
is somewhere dense in $\mathbb{R}^k$. Hence, there is $\delta >0$ and a subsequence $(n_l)$ of natural numbers, such that
$$
|P(h_{(A_1, j_1)}(n_l),\dots , h_{(A_k, j_k)}(n_l))|\geq \delta.
$$
In particular, since the underlying product is the coordinatewise, we get that
$$
|P(h_{(A_1, j_1)},\dots , h_{(A_k, j_k)})(n_l)|=|P(h_{(A_1, j_1)}(n_l),\dots , h_{(A_k, j_k)}(n_l))|\geq \delta
$$
which yields that
$$
P(h_{(A_1, j_1)},\dots , h_{(A_k, j_k)})\notin c_0
$$
hence, $B\cap c_0=\{0\}$ and the family $\{ h_{(A, j)}: A\subset \mathbb{N}, j\in \mathbb{N}\}$ is algebraically independent. Now, either by noticing that
$$
\card\{ h_{(A, j)}: A\subset \mathbb{N}, j\in \mathbb{N}\}=\mathfrak{c}
$$
or by an immediate application of Proposition \ref{maximality} (ii) the proof is complete. 
\end{proof}

\begin{remark}
The proofs of Theorem \ref{basic} and Theorem \ref{algebrability} can be easily modified to provide the same results in the complex case.
\end{remark}

\begin{remark}
The proofs of Theorem \ref{basic} and Theorem \ref{algebrability} actually show that the set 
$$
\ell^{\infty}\setminus c
$$ 
is maximal dense-lineable and  densely strongly $\mathfrak{c}$-algebrable, where $c$ is the space of real convergent sequences. For the analogue of Theorem \ref{algebrability} one would need to use the fact that the image under any non-constant polynomial of several variables of an open set has non-empty interior. 
\end{remark}

\textbf{Acknowledgement.} The author would like to thank the anonymous referee for his/her insightful comments which substantially improved the paper.

\end{document}